\newcommand{\Hom}{\operatorname{Hom}\nolimits}
\renewcommand{\mod}{\operatorname{mod}\nolimits}
\newcommand{\gldim}{\operatorname{gl.dim}\nolimits}
\newcommand{\Ext}{\operatorname{Ext}\nolimits}
\newcommand{\HH}{\operatorname{HH}\nolimits}
\newcommand{\Ho}{\operatorname{H}\nolimits}
\newcommand{\rank}{\operatorname{rank}\nolimits}
\newcommand{\m}{\operatorname{\mathfrak{m}}\nolimits}
\newcommand{\ra}{\mathfrak{r}}
\newcommand{\La}{\Lambda}
\newcommand{\cx}{\operatorname{cx}\nolimits}
\newcommand{\op}{\operatorname{op}\nolimits}
\newcommand{\e}{\operatorname{e}\nolimits}
\newcommand{\Lae}{\Lambda^{\e}}
\newcommand{\repdim}{\operatorname{repdim}\nolimits}
\newcommand{\Krulldim}{\operatorname{Krull dim}\nolimits}
\newcommand{\End}{\operatorname{End}\nolimits}
\newtheorem{theorem}{Theorem}[section]
\newtheorem{corollary}[theorem]{Corollary}
\newtheorem{lemma}[theorem]{Lemma}
\newtheorem{proposition}[theorem]{Proposition}
\theoremstyle{definition}
\theoremstyle{definition}
\newtheorem*{question}{Question}
\theoremstyle{definition}
\theoremstyle{definition}
\theoremstyle{definition}
\newtheorem*{examples}{Examples}
\theoremstyle{definition}
\theoremstyle{remark}
\newtheorem*{remark}{Remark}
\theoremstyle{definition}
\theoremstyle{definition}
\newtheorem*{assumption}{Assumption}
\begin{document}
\title{Representation dimension and finitely generated cohomology}
\author{Petter Andreas Bergh}
\address{Petter Andreas Bergh \newline Institutt for matematiske fag \\
  NTNU \\ N-7491 Trondheim \\ Norway}
\email{bergh@math.ntnu.no}


\thanks{The author was supported by NFR Storforsk grant no.\
167130}

\subjclass[2000]{16G60, 16E30, 16E40}

\keywords{Representation dimension, finitely generated cohomology,
complexity}

\maketitle

\begin{abstract}
We consider selfinjective Artin algebras whose cohomology groups are
finitely generated over a central ring of cohomology operators. For
such an algebra, we show that the representation dimension is
strictly greater than the maximal complexity occurring among its
modules. This provides a unified approach to computing lower bounds
for the representation dimension of group algebras, exterior
algebras and Artin complete intersections. We also obtain new
examples of classes of algebras with arbitrarily large
representation dimension.
\end{abstract}

\section{Introduction}\label{intro}

In his 1971 notes \cite{Auslander1}, Auslander introduced the
notion of the representation dimension of an Artin algebra. This
invariant measures how far an algebra is from having finite
representation type; it was introduced in order to study algebras
of infinite representation type. A non-semisimple algebra is of
finite type if and only if its representation dimension is exactly
two, and of infinite type if and only if the representation
dimension is at least three.

For a long time it was unclear whether there could exist algebras of
representation dimension strictly greater than three. Moreover,
Igusa and Todorov showed in \cite{Igusa} that if this was not the
case, i.e.\ if the representation dimension could not exceed three,
then the finitistic dimension conjecture would hold. However, in
2006 Rouquier showed in \cite{Rouquier2} that the representation
dimension of the exterior algebra on a $d$-dimensional vector space
is $d+1$, using the notion of the dimension of a triangulated
category (cf.\ \cite{Rouquier1}). Other examples illustrating this
were subsequently given in \cite{Avramov2}, \cite{BerghOppermann},
\cite{Krause}, \cite{Oppermann1} and \cite{Oppermann2}.

In this paper we study selfinjective Artin algebras satisfying a
certain finite generation hypothesis on its cohomology groups. We
show that the representation dimension of such an algebra is
strictly greater than the maximal complexity occurring among its
modules. This provides a unified approach to computing the known
lower bounds for the representation dimension of group algebras,
exterior algebras and Artin complete intersections. We also obtain
new examples of classes of algebras with arbitrarily large
representation dimension.

\section{Representation dimension}\label{repdim}

Throughout this paper, we let $k$ be a commutative Artin ring and
$\La$ an Artin $k$-algebra with Jacobson radical $\ra$. We denote by
$\mod \La$ the category of finitely generated $\La$-modules. The
\emph{representation dimension} of $\La$, denoted $\repdim \La$, is
defined as
$$\repdim \La \stackrel{\text{def}}{=} \inf \{ \gldim
\End_{\La}(M) \mid M \text{ generates and cogenerates } \mod \La
\},$$ where $\gldim$ denotes the global dimension of an algebra.
Auslander showed that the representation dimension of a
selfinjective algebra is at most its Loewy length, whereas Iyama
showed in \cite{Iyama} that this invariant is finite for every
Artin algebra.

In order to compute the representation dimension of exterior
algebras, Rouquier used the notion of the dimension of a
triangulated category, a concept he introduced in
\cite{Rouquier1}. We recall here the definitions. Let
$\mathcal{T}$ be a triangulated category, and let $\mathcal{C}$
and $\mathcal{D}$ be subcategories of $\mathcal{T}$. We denote by
$\langle \mathcal{C} \rangle$ the full subcategory of
$\mathcal{T}$ consisting of all the direct summands of finite
direct sums of shifts of objects in $\mathcal{C}$. Furthermore, we
denote by $\mathcal{C} \ast \mathcal{D}$ the full subcategory of
$\mathcal{T}$ consisting of objects $M$ such that there exists a
distinguished triangle
$$C \to M \to D \to C[1]$$
in $\mathcal{T}$, with $C \in \mathcal{C}$ and $D \in
\mathcal{D}$. Finally, we denote the subcategory $\langle
\mathcal{C} \ast \mathcal{D} \rangle$ by $\mathcal{C} \diamond
\mathcal{D}$. Now define $\langle \mathcal{C} \rangle_1$ to be
$\langle \mathcal{C} \rangle$, and for each $n \ge 2$ define
inductively $\langle \mathcal{C} \rangle_n$ to be $\langle
\mathcal{C} \rangle_{n-1} \diamond \langle \mathcal{C} \rangle$.
The \emph{dimension} of $\mathcal{T}$, denoted $\dim \mathcal{T}$,
is defined as
$$\dim \mathcal{T} \stackrel{\text{def}}{=} \inf \{ d \in
\mathbb{Z} \mid \text{ there exists an object } M \in \mathcal{T}
\text{ such that } \mathcal{T} = \langle M \rangle_{d+1} \}.$$ In
other words, the dimension of $\mathcal{T}$ is the minimal number of
layers needed to obtain $\mathcal{T}$ from one of its objects.

The key ingredient in the proof of our main result is the
following lemma on compositions of natural transformations. The
lemma is analogous to \cite[Lemma 4.11]{Rouquier1}.

\begin{lemma}\label{sequence}
Let $\mathcal{T}$ be a triangulated category, let $H_1, \dots,
H_{n+1}$ be cohomological functors on $\mathcal{T}$, and for each
$1 \le i \le n$ let $H_i \xrightarrow{f_i} H_{i+1}$ be a natural
transformation. Furthermore, let $\mathcal{C}_1, \dots,
\mathcal{C}_n$ be subcategories of $\mathcal{T}$ closed under
shifts, and assume that for every object $c \in \mathcal{C}_i$ the
map $H_i(c[j]) \xrightarrow{f_i} H_{i+1}(c[j])$ vanishes for $j
\gg 0$ (respectively, for $j \ll 0$). Then for every object $w \in
\mathcal{C}_1 \diamond \cdots \diamond \mathcal{C}_n$ the map
$H_1(w[j]) \xrightarrow{f_n \cdots f_1} H_{n+1}(w[j])$ vanishes
for $j \gg 0$ (respectively, for $j \ll 0$).
\end{lemma}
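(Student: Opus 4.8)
The plan is to induct on $n$. The case $n=1$ is exactly the hypothesis, since then there are no $\diamond$-factors and $w\in\mathcal{C}_1$. For the inductive step I would first record that $\ast$, and hence $\diamond$, is associative (this follows from the octahedral axiom, as in \cite{Rouquier1}), so that $\mathcal{C}_1\diamond\cdots\diamond\mathcal{C}_n=\mathcal{C}_1\diamond\mathcal{D}$ with $\mathcal{D}=\mathcal{C}_2\diamond\cdots\diamond\mathcal{C}_n$. Applying the induction hypothesis to the transformations $f_2,\dots,f_n$ and the categories $\mathcal{C}_2,\dots,\mathcal{C}_n$ shows that $\psi:=f_n\cdots f_2\colon H_2\to H_{n+1}$ vanishes on $\mathcal{D}$ for $j\gg 0$. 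Since every cohomological functor is additive and every natural transformation is compatible with finite direct sums, shifts, and direct summands, the vanishing condition is preserved under the operations defining $\langle-\rangle$ (one simply takes the maximum of finitely many bounds), so the passage from $\mathcal{C}_1\ast\mathcal{D}$ to $\langle\mathcal{C}_1\ast\mathcal{D}\rangle=\mathcal{C}_1\diamond\mathcal{D}$ is routine. Thus I may assume $w\in\mathcal{C}_1\ast\mathcal{D}$, so that there is a distinguished triangle $C\xrightarrow{u}w\xrightarrow{v}D\xrightarrow{\delta}C[1]$ with $C\in\mathcal{C}_1$ and $D\in\mathcal{D}$. Writing $\varphi:=f_1$, the problem reduces to: $\varphi$ vanishes on $C$ and $\psi$ vanishes on $D$, both for $j\gg 0$, and I must show that $\psi\varphi$ vanishes on $w[j]$ for $j\gg 0$.

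For this core step I would apply the three cohomological functors $H_1$, $H_2$ and $H_{n+1}$ to the triangle, obtaining three long exact sequences linked by $\varphi$ and $\psi$ into a commutative ladder, the squares commuting by naturality. Fix a large $j$ and an element $x\in H_1(w[j])$. The naturality square at $u$ together with $\varphi|_{C[j]}=0$ gives $\varphi_{w}\circ H_1(u)=H_2(u)\circ\varphi_C=0$, so $\varphi$ annihilates $\Im\bigl(H_1(u)\colon H_1(C[j])\to H_1(w[j])\bigr)=\Ker H_1(v)$; in effect $\varphi_{w[j]}$ depends only on $H_1(v)(x)\in H_1(D[j])$, i.e.\ $\varphi$ factors through the quotient term $D$. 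Dually, the naturality square at $v$ together with $\psi|_{D[j]}=0$ forces $H_{n+1}(v)\bigl(\psi\varphi(x)\bigr)=\psi_{D}\bigl(\varphi_{D}(H_1(v)(x))\bigr)=0$, whence by exactness $\psi\varphi(x)$ lies in $\Im\bigl(H_{n+1}(u)\colon H_{n+1}(C[j])\to H_{n+1}(w[j])\bigr)$.

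The hard part is exactly here. The two hypotheses make the ``diagonal'' contributions of the composite vanish, but they leave a residual term valued in $\Im H_{n+1}(u)$, that is, a term produced by the connecting homomorphism $\delta$; a single degree's exact sequences do not force this term to be zero (indeed, for a split triangle the whole statement already requires the naturality of $\varphi,\psi$ with respect to the splitting maps, and in general the connecting maps must play that role). To eliminate it I would play $\delta$ against the hypotheses at neighbouring degrees: naturality at $\delta$ combined with the vanishing of $\psi$ on every $D[j']$ with $j'\gg 0$ shows that $\psi$ kills the image of each $H_2(\delta)$, and feeding this back through the ladder degree by degree should clear the residual term once $j$ is large enough. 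Controlling this connecting-map contribution uniformly across all large shifts is the main obstacle, and it is precisely where the bound ``for $j\gg 0$'' is indispensable; the ``$j\ll 0$'' variant is handled by the same argument with the direction of the shifts reversed.
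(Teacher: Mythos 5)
Your reduction to the two-factor case (induction on $n$, associativity of $\diamond$, passage from $\langle\mathcal{C}_1\ast\mathcal{D}\rangle$ down to $\mathcal{C}_1\ast\mathcal{D}$) matches the paper's proof, and the two facts you extract from the ladder --- that $\psi\varphi$ kills $\Im H_1(u)$ and that $\psi\varphi(x)$ lands in $\Im H_{n+1}(u)$ --- are correct. But the argument stops exactly where it has to close, and you say so yourself: the ``residual term'' in $\Im H_{n+1}(u)$ is never eliminated. This is a genuine gap, and the repair you sketch cannot fill it: the residual term is $H_{n+1}(u)(z)$ for some $z\in H_{n+1}(C[j])$, and the hypotheses say nothing at all about $H_{n+1}$ (or about $\psi$) on shifts of $C$ --- only $\varphi\colon H_1\to H_2$ vanishes there --- so no amount of chasing the connecting map $\delta$ through neighbouring degrees will produce the required vanishing.

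The missing move is to apply exactness not to $x$ or to $\psi\varphi(x)$ but to the intermediate element $\varphi(x)\in H_2(w[j])$, using the two vanishing hypotheses at \emph{opposite} ends of the exact row so that the chase closes with no residue; this is the paper's ``easy diagram chase''. Concretely, with the cohomological functors read contravariantly (the convention under which the statement is correct as written, the row being $H_i(D[j])\xrightarrow{v^*}H_i(w[j])\xrightarrow{u^*}H_i(C[j])$): naturality gives $u^*(\varphi(x))=\varphi(u^*x)=0$ because $u^*x\in H_1(C[j])$ where $\varphi$ vanishes, so by exactness $\varphi(x)=v^*(y)$ for some $y\in H_2(D[j])$; then $\psi(\varphi(x))=\psi(v^*y)=v^*(\psi(y))=0$ because $\psi$ vanishes on $H_2(D[j])$. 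One lift, two naturality squares, no connecting morphism. With your covariant orientation the same chase requires $\varphi$ to vanish on the cone and $\psi$ on the fibre, i.e.\ the clean conclusion is for $\mathcal{C}_n\diamond\cdots\diamond\mathcal{C}_1$; in the paper's application all the $\mathcal{C}_i$ coincide, so this is harmless there, but in your write-up the two hypotheses are attached to the wrong ends of the triangle and the argument genuinely does not terminate.
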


\begin{proof}
We may assume $n \ge 2$. Let $c_1 \to c \to c_2 \to c_1[1]$ be a
triangle in $\mathcal{T}$ with $c_1 \in \mathcal{C}_1$ and $c_2 \in
\mathcal{C}_2$. Then for every $j \in \mathbb{Z}$, there is a
commutative diagram
$$\xymatrix{
H_1(c_1[j]) \ar[r] \ar[d]^{f_1} & H_1(c[j]) \ar[r] \ar[d]^{f_1} &
H_1(c_2[j]) \ar[d]^{f_1} \\
H_2(c_1[j]) \ar[r] \ar[d]^{f_2} &
H_2(c[j]) \ar[r] \ar[d]^{f_2} & H_2(c_2[j]) \ar[d]^{f_2} \\
H_3(c_1[j]) \ar[r] & H_3(c[j]) \ar[r] & H_3(c_2[j]) }$$ with exact
rows. By assumption, there is an integer $j_1$ such that the
vertical upper left map vanishes for $j \ge j_1$, and an integer
$j_2$ such that the vertical lower right map vanishes for $j \ge
j_2$. An easy diagram chase shows that the vertical middle
composition vanishes for $j \ge \max \{ j_1, j_2 \}$, hence for
every object $w \in \mathcal{C}_1 \diamond \mathcal{C}_2$ the map
$H_1(w[j]) \xrightarrow{f_2 f_1} H_3(w[j])$ vanishes for $j \gg 0$.
An induction argument now establishes the lemma.
\end{proof}

The triangulated category we shall use is the \emph{stable module
category} of $\La$, in the case when $\La$ is selfinjective. This
category, denoted $\underline{\mod} \La$, is defined as follows: the
objects of $\underline{\mod} \La$ are the same as in $\mod \La$, but
two morphisms in $\mod \La$ are equal in $\underline{\mod} \La$ if
their difference factors through a projective $\La$-module. The
cosyzygy functor $\Omega_{\La}^{-1} \colon \underline{\mod} \La \to
\underline{\mod} \La$ is an equivalence of categories, and a
triangulation of $\underline{\mod} \La$ is given by using this
functor as a shift and by letting short exact sequences in $\mod
\La$ correspond to triangles. Thus $\underline{\mod} \La$ is a
triangulated category, and its dimension is related to the
representation dimension of $\La$ by the following result.

\begin{proposition}\cite[Proposition 3.7]{Rouquier2}\label{lowerbound}
If $\La$ is selfinjective and not semisimple, then $\repdim \Lambda
\ge \dim ( \underline{\mod} \Lambda ) +2$.
\end{proposition}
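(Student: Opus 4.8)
The plan is to bound $\dim(\underline{\mod}\La)$ from above by $\repdim\La - 2$, by building every stable module out of a single generator-cogenerator using the number of ``layers'' prescribed by the global dimension of its endomorphism ring. Fix a module $M$ that generates and cogenerates $\mod\La$, set $\Gamma = \End_\La(M)$ and $d = \gldim\Gamma$; since $\La$ is selfinjective and not semisimple we have $d \ge 2$ and $\underline{\mod}\La \neq 0$, and it suffices to prove $\dim(\underline{\mod}\La) \le d-2$, because $\repdim\La$ is the infimum of such $d$. The first ingredient is Auslander's homological characterization of the endomorphism ring: because $\Hom_\La(M,-)$ restricts to an equivalence between $\operatorname{add} M$ and the projective $\Gamma$-modules, lifting a projective $\Gamma$-resolution of $\Hom_\La(M,X)$ shows that every $X \in \mod\La$ admits an exact sequence
$$0 \to M_{d-2} \to \cdots \to M_1 \to M_0 \to X \to 0$$
with each $M_i \in \operatorname{add} M$, remaining exact after applying $\Hom_\La(M,-)$. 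This is the step I would cite from Auslander rather than reprove, and pinning down its length exactly (a resolution with $d-1$ terms, hence $d-2$ connecting short exact sequences) is where the off-by-one in the final estimate is decided.

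With this resolution in hand I would pass to the stable category. Writing $X_0 = X$ and $X_{i+1} = \Ker(M_i \to X_i)$ splits the resolution into short exact sequences $0 \to X_{i+1} \to M_i \to X_i \to 0$ for $0 \le i \le d-3$, together with $X_{d-2} = M_{d-2} \in \operatorname{add} M$. Since $\La$ is selfinjective, each such short exact sequence is a triangle $X_{i+1} \to M_i \to X_i \to \Omega_\La^{-1}(X_{i+1})$ in $\underline{\mod}\La$. Rotating yields a triangle $M_i \to X_i \to \Omega_\La^{-1}(X_{i+1}) \to \Omega_\La^{-1}(M_i)$, which exhibits $X_i$ as an object of $\langle M\rangle \ast \langle X_{i+1}\rangle$, using that $\langle X_{i+1}\rangle$ is closed under the shift $\Omega_\La^{-1}$.

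I would then run a descending induction on $i$ to prove $X_i \in \langle M\rangle_{d-1-i}$. The base case $i = d-2$ is immediate, since $X_{d-2} \in \operatorname{add} M \subseteq \langle M\rangle = \langle M\rangle_1$. For the inductive step, the rotated triangle together with the hypothesis $X_{i+1} \in \langle M\rangle_{d-2-i}$ places $X_i$ in $\langle M\rangle \ast \langle M\rangle_{d-2-i}$; because both factors are closed under shifts, rotating triangles shows this equals $\langle M\rangle_{d-2-i} \ast \langle M\rangle$, whose closure $\langle\langle M\rangle_{d-2-i}\ast\langle M\rangle\rangle$ is by definition $\langle M\rangle_{d-1-i}$. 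Taking $i = 0$ gives $X \in \langle M\rangle_{d-1}$, and since $X$ was arbitrary and $M$ generates, we obtain $\underline{\mod}\La = \langle M\rangle_{d-1}$, whence $\dim(\underline{\mod}\La) \le d-2$. Taking the infimum over all such $M$ then yields the claim.

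The main obstacle, beyond invoking Auslander's resolution with the correct length, is the triangulated bookkeeping in the induction: one must verify that $\ast$ is associative and that $\mathcal{C}\ast\mathcal{D}$ and $\mathcal{D}\ast\mathcal{C}$ share the same closure when $\mathcal{C}$ and $\mathcal{D}$ are closed under shifts, so that the side produced by the rotated triangles matches the side appearing in the inductive definition of $\langle M\rangle_n$. The projective summands of the $M_i$ cause no difficulty, as they simply vanish in $\underline{\mod}\La$.
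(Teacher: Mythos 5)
Your proof is correct in substance, but it is worth noting that the paper itself does not prove this proposition: it simply cites Rouquier's Proposition 3.7 and remarks that the argument carries over from finite-dimensional algebras over a field to Artin algebras. What you have written is essentially a reconstruction of Rouquier's own argument: Auslander's characterization of $\gldim \End_{\La}(M) \le d$ via $\operatorname{add}M$-resolutions $0 \to M_{d-2} \to \cdots \to M_0 \to X \to 0$ of length $d-2$, followed by splicing the resulting short exact sequences into triangles of $\underline{\mod}\La$ and counting layers. The indexing is right, the base case and the passage $X_i \in \langle M\rangle \ast \langle X_{i+1}\rangle$ via the rotated triangle $M_i \to X_i \to \Omega_{\La}^{-1}(X_{i+1}) \to \Omega_{\La}^{-1}(M_i)$ are correct, and the degenerate cases ($d=2$, projective summands) are handled.

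One justification is off, though the step it supports is salvageable. You claim that for subcategories closed under shifts, $\mathcal{C}\ast\mathcal{D}$ and $\mathcal{D}\ast\mathcal{C}$ have the same closure because one can ``rotate triangles''; this is false in general (rotation moves a different object into the middle position of the triangle, so it does not exchange the roles of $\mathcal{C}$ and $\mathcal{D}$ around a fixed middle term -- e.g.\ two simples with $\Ext^1$ nonvanishing in only one direction give $\langle \mathcal{C}\ast\mathcal{D}\rangle \neq \langle\mathcal{D}\ast\mathcal{C}\rangle$). What you actually need is only that $\langle\langle M\rangle \ast \langle M\rangle_m\rangle = \langle M\rangle_{m+1}$, i.e.\ that building $\langle M\rangle_n$ by attaching copies of $\langle M\rangle$ on the left gives the same result as the paper's right-handed inductive definition. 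Since all factors are the same category $\langle M\rangle$, this follows purely from the associativity of $\ast$ (a consequence of the octahedral axiom, which you also invoke); no commutativity statement is needed, and you should drop that claim rather than rely on it.
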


This result was originally formulated for a finite dimensional
algebra over a field, but it works just as well in our setting,
i.e.\ for an Artin algebra which is not necessarily finite
dimensional over a field.

The main result of this paper relates the representation dimension
of $\La$ with the maximal complexity occurring among its finitely
generated modules. Recall therefore that for a module $M \in \mod
\La$ with minimal projective resolution
$$\cdots \to P_2 \to P_1 \to P_0 \to M \to 0,$$
say, the \emph{complexity} of $M$ is defined as
$$\cx M \stackrel{\text{def}}{=} \inf \{ t \in \mathbb{N} \cup \{ 0
\} \mid \exists a \in \mathbb{R} \text{ such that } \ell_k (P_n)
\leq an^{t-1} \text{ for } n \gg 0 \}.$$ In general, the complexity
of a module may be infinite, whereas it is zero if and only if the
module is projective. The complexity of $M$ can be computed as the
rate of growth of the graded $k$-module $\Ext_{\La}^* (M, \La / \ra
)$, and from the definition we also see that it equals the
complexity of $\Omega_{\La}^i(M)$ for any $i \in \mathbb{N}$.
Moreover, given a short exact sequence
$$0 \to X_1 \to X_2 \to X_3 \to 0$$
in $\mod \La$, it is well known that the inequality $\cx X_u \le
\sup \{ \cx X_v, \cx X_w \}$ holds for $\{ u,v,w \} = \{ 1,2,3 \}$.
In particular, induction on the length of a module shows that $\cx X
\le \cx \La / \ra$ for every $X \in \mod \La$. We end this section
with the following elementary lemma, which shows that a module
generating $\underline{\mod} \La$ must be of maximal complexity.

\begin{lemma}\label{maxcx}
Let $\La$ be selfinjective, let $M \in \mod \La$ be a module, and
suppose there exists a number $n \in \mathbb{N}$ such that $\langle
M \rangle_n = \underline{\mod} \La$. Then $\cx N \le \cx M$ for
every $N \in \mod \La$, in particular $\cx M = \cx \La / \ra$.
\end{lemma}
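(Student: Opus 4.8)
The plan is to prove, by induction on $n$, that every object $W$ of $\langle M \rangle_n$ satisfies $\cx W \le \cx M$; applying this with the $n$ from the hypothesis to an arbitrary $N \in \mod \La = \underline{\mod} \La$ then yields $\cx N \le \cx M$. Before starting the induction I would record that complexity descends to the stable category: adding a projective summand to a module leaves its minimal projective resolution unchanged above degree zero, so $\cx$ is invariant under stable isomorphism and hence is a well-defined function on the objects of $\underline{\mod}\La$.

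For the base case $n=1$ I would bound $\cx$ by $\cx M$ on $\langle M \rangle = \langle M \rangle_1$. The shift on $\underline{\mod}\La$ is the cosyzygy $\Omega_{\La}^{-1}$, and the excerpt already records $\cx \Omega_{\La}^i(M) = \cx M$; since the minimal projective resolution of a direct sum is the direct sum of the resolutions, $\cx(X \oplus Y) = \max \{ \cx X, \cx Y \}$, so finite direct sums of shifts of $M$ have complexity at most $\cx M$, and passing to a direct summand only decreases complexity. Thus every object of $\langle M \rangle$ has complexity at most $\cx M$.

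The inductive step rests on the behaviour of complexity along triangles. Assume the claim for $\langle M \rangle_{n-1}$. By definition $\langle M \rangle_n = \langle \langle M \rangle_{n-1} \ast \langle M \rangle \rangle$, so by the summand and direct-sum argument above it suffices to bound $\cx$ on $\langle M \rangle_{n-1} \ast \langle M \rangle$. An object $W$ there fits into a triangle $C \to W \to D \to C[1]$ with $C \in \langle M \rangle_{n-1}$ and $D \in \langle M \rangle$. Every triangle in $\underline{\mod}\La$ is isomorphic to one arising from a short exact sequence $0 \to C \to W' \to D \to 0$ in $\mod \La$ with $W'$ stably isomorphic to $W$; the subadditivity $\cx W' \le \max \{ \cx C, \cx D \}$ recorded in the excerpt, together with $\cx W = \cx W'$ and the inductive bounds $\cx C \le \cx M$ and $\cx D \le \cx M$, gives $\cx W \le \cx M$. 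This completes the induction.

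Finally, since $\langle M \rangle_n = \underline{\mod}\La$ contains every finitely generated module, the induction yields $\cx N \le \cx M$ for all $N \in \mod \La$. Taking $N = \La/\ra$ gives $\cx \La/\ra \le \cx M$, while the general inequality $\cx X \le \cx \La/\ra$ quoted in the excerpt gives $\cx M \le \cx \La/\ra$; hence $\cx M = \cx \La/\ra$. I expect the only delicate point to be the translation between a distinguished triangle and an actual short exact sequence, which is what lets us invoke the subadditivity of complexity; once one observes that $\cx$ is a stable invariant, this reduces to the standard fact that the triangles in $\underline{\mod}\La$ come from short exact sequences.
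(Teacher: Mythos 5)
Your proof is correct and is essentially a fully written-out version of the paper's own one-line argument, which likewise rests on the correspondence between triangles in $\underline{\mod}\La$ and short exact sequences in $\mod\La$, combined with the subadditivity of complexity and its invariance under shifts, sums, and summands.
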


\begin{proof}
The result follows from the fact that triangles in $\underline{\mod}
\La$ correspond to short exact sequences in $\mod \La$.
\end{proof}

\section{Finitely generated cohomology}\label{fingen}

We now introduce a certain ``finite generation" assumption on the
cohomology groups of $\La$. Recall that for $\La$-modules $X$ and
$Y$, the graded $k$-module $\Ext_{\La}^*(X,Y)$ is an
$\Ext_{\La}^*(Y,Y)-\Ext_{\La}^*(X,X)$-bimodule via Yoneda products.
Also recall that a graded $k$-module $\bigoplus V_i$ is of
\emph{finite type} if each $V_i$ is a finitely generated $k$-module.

\begin{assumption}[\bf{Fg}]
There exists a commutative Noetherian graded $k$-algebra $H =
\bigoplus_{i=0}^{\infty} H^i$ of finite type satisfying the
following:
\begin{enumerate}
\item[(i)] For every $M \in \mod \La$ there is a graded ring homomorphism
$$\phi_M \colon H \to \Ext_{\La}^*(M,M).$$
\item[(ii)] For each pair $(X,Y)$ of finitely generated
$\La$-modules, the scalar actions from $H$ on $\Ext_{\La}^*(X,Y)$
via $\phi_X$ and $\phi_Y$ coincide, and $\Ext_{\La}^*(X,Y)$ is a
finitely generated $H$-module.
\end{enumerate}
\end{assumption}

In the assumption, why do we require that the left and right scalar
multiplications on $\Ext_{\La}^*(X,Y)$ coincide? The reason is that
this requirement is what makes the bifunctor $\Ext_{\La}^* (-,-)$
preserve maps. To see this, let $f \colon M \to M'$ be a
homomorphism in $\mod \La$. For every $N \in \mod \La$, this map
induces a homomorphism $\hat{f} \colon \Ext_{\La}^*(M',N) \to
\Ext_{\La}^*(M,N)$ of graded groups. The image of a homogeneous
element
$$\theta \colon 0 \to N \to X_n \to \cdots \to X_1 \to
M' \to 0$$ is the extension $\theta f$ given by the
commutative diagram
$$\xymatrix{
\theta f \colon 0 \ar[r] & N \ar[r] \ar@{=}[d] & X_n \ar[r]
\ar@{=}[d] & \cdots \ar[r] & X_2 \ar[r] \ar@{=}[d] & Y \ar[r]
\ar[d] & M \ar[r] \ar[d]^f & 0 \\
\theta \colon 0 \ar[r] & N \ar[r] & X_n \ar[r] & \cdots \ar[r] &
X_2 \ar[r] & X_1 \ar[r] & M' \ar[r] & 0 }$$ in which the
module $Y$ is a pullback. For a homogeneous element $\eta \in
H$ we then get
\begin{eqnarray*}
\hat{f} ( \theta \cdot \eta ) & = & \hat{f} ( \eta \cdot \theta ) \\
& = & \hat{f} \left ( \phi_N ( \eta ) \circ \theta \right ) \\
& = & \phi_N ( \eta ) \circ ( \theta f) \\
& = & \eta \cdot \hat{f} ( \theta ) \\
& = & \hat{f} ( \theta ) \cdot \eta,
\end{eqnarray*}
showing $\hat{f}$ is a homomorphism of $H$-modules. Similarly,
$\Ext_{\La}^*(-,-)$ preserves maps in the second argument. The fact
that $\Ext_{\La}^*(-,-)$ preserves maps is absolutely essential.
Note that when using this property, induction on length of modules
shows that the finite generation part of the assumption {\bf{Fg}} is
equivalent to $\Ext_{\La}^*( \La / \ra, \La / \ra )$ being a
finitely generated $H$-module.

\sloppy It should also be noted that when {\bf{Fg}} holds, then
every finitely generated $\La$-module has finite complexity, i.e.\
$\cx \La / \ra < \infty$. Namely, the $H$-module $\Ext_{\La}^*( \La
/ \ra, \La / \ra )$ is finitely generated, and so its rate of growth
is not more than that of $H$. The ring $H$ is commutative Noetherian
and of finite type, and hence its rate of growth is finite.

In the following examples we point out three important situations
in which the assumption {\bf{Fg}} holds.

\begin{examples}
(i) Suppose $k$ is a field of positive characteristic $p$, and let
$G$ be a finite group whose order is divisible by $p$. Then by a
theorem of Evens (cf.\ \cite{Evens}), the graded commutative group
cohomology ring $\Ho^*(G,k) = \Ext_{kG}^* (k,k)$ is Noetherian.
Moreover, if $X_1$ and $X_2$ are finitely generated $kG$-modules,
then $\Ext_{kG}^*(X_1,X_2)$ is a finitely generated
$\Ho^*(G,k)$-module via the maps
$$ - \otimes_k X_i \colon \Ho^*(G,k) \to \Ext_{kG}^*(X_i,X_i),$$
and the right and left scalar actions induced by these maps commute
up to a graded sign. The even part $\bigoplus H^{2i} (G,k)$ of
$\Ho^*(G,k)$ is a commutative $k$-algebra, over which $\Ho^*(G,k)$
is finitely generated as a module.

(ii) Let $(A, \m, k)$ be a commutative Noetherian local complete
intersection. That is, the completion $\widehat{A}$, with respect to
the maximal ideal $\m$, is of the form $R / (x_1, \dots, x_c)$,
where $R$ is regular local and $x_1, \dots, x_c$ is a regular
sequence. We may without loss of generality assume that the length
$c$ of the defining regular sequence is the codimension of $A$,
i.e.\ $c = \dim_k \left ( \m / \m^2 \right ) - \dim A$. By
\cite[Section 1]{Avramov1} there exists a polynomial ring
$\widehat{A}[\chi_1, \dots, \chi_c]$ in commuting \emph{Eisenbud
operators}, such that for every finitely generated
$\widehat{A}$-module $X$ there is a homomorphism
$$\phi_X \colon \widehat{A}[\chi_1, \dots, \chi_c] \to
\Ext_{\widehat{A}}^*(X,X)$$ of graded rings. Moreover, for every
finitely generated $\widehat{A}$-module $Y$, the left and right
scalar actions on $\Ext_{\widehat{A}}^*(X,Y)$ coincide, and the
latter is a finitely generated $\widehat{A}[\chi_1, \dots,
\chi_c]$-module. Now if $A$ is Artin, then it is a complete ring
since $\m$ is nilpotent. Thus {\bf{Fg}} holds in this case.

(iii) Suppose $\La$ is projective as a $k$-module. Denote by $\Lae$
the enveloping algebra $\La \otimes_k \La^{\op}$ of $\La$, and by
$\HH^* (\La)$ its Hochschild cohomology ring. By \cite[Proposition
3]{Yoneda}, this is a graded commutative ring, and equal to
$\Ext_{\Lae}^* ( \La, \La )$ since $\La$ is $k$-projective. If $X_1$
and $X_2$ are finitely generated $\La$-modules, then the right and
left scalar actions from $\HH^* (\La)$ on $\Ext_{\La}^*(X_1,X_2)$,
via the maps
$$- \otimes_{\La} X_i \colon \HH^*(\La) \to \Ext_{\La}^*(X_i,X_i),$$
are graded commutative.

In \cite{Erdmann} the finite generation assumption imposed was the
following: there exists a commutative Noetherian graded subalgebra
$S = \bigoplus_{i=0}^{\infty} S^i$ of $\HH^* (\La)$, with $S^0 =
\HH^0 (\La)$, such that $\Ext_{\La}^*(\La / \ra, \La / \ra )$ is a
finitely generated $S$-module. However, having to deal with such an
``unknown" subalgebra of the Hochschild cohomology ring is not
satisfactory, and in fact it is not difficult to see (cf.\
\cite[Proposition 5.7]{Solberg}) that the assumption is equivalent
to the following one: the Hochschild cohomology ring $\HH^* (\La)$
is Noetherian and $\Ext_{\La}^*(\La / \ra, \La / \ra )$ is a
finitely generated $\HH^* (\La)$-module. Therefore, as in the first
example, we see that {\bf{Fg}} holds by choosing $H$ to be the even
part $\bigoplus \HH^{2i} ( \La )$ of $\HH^* (\La)$.

In particular, the assumption {\bf{Fg}} holds for exterior algebras.
Namely, suppose $k$ is a field, let $n$ be a number, and denote by
$\La$ the algebra
$$k \langle x_1, \dots, x_n \rangle / ( x_i^2, x_ix_j + x_jx_i ),$$
i.e.\ $\La$ is the exterior algebra on an $n$-dimensional vector
space. Then by \cite[Theorem 9.2 and Theorem 9.11]{Solberg}, the
Koszul dual $\Ext_{\La}^*(k,k)$ of $\La$ is the polynomial ring
$k[x_1, \dots, x_n]$, and via the map
$$- \otimes_{\La} k \colon \HH^*(\La) \to \Ext_{\La}^* (k,k)$$
this is a finitely generated $\HH^* (\La)$-module. By choosing $H$
to be the even part of the inverse image of $k[x_1, \dots, x_n]$, we
see that {\bf{Fg}} holds.
\end{examples}

Having pointed out these three examples where {\bf{Fg}} holds, we
now prove the main result: when $\La$ is selfinjective and {\bf{Fg}}
holds, then the dimension of the stable module category of $\La$ is
at least $\cx \La / \ra -1$.

\begin{theorem}\label{dimension}
If $\La$ is selfinjective and {\bf{Fg}} holds, then $\dim (
\underline{\mod} \Lambda ) \ge \cx \La / \ra -1$.
\end{theorem}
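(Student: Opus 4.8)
The plan is to deduce the bound from Lemma \ref{sequence}, used as a ghost device. First I would unwind the definition of dimension: since $\dim(\underline{\mod}\La) = \inf\{d : \underline{\mod}\La = \langle M \rangle_{d+1}\text{ for some }M\}$, it suffices to prove that whenever $\langle M \rangle_n = \underline{\mod}\La$ one has $n \ge c := \cx \La/\ra$; this forces $d+1 \ge c$ and hence $\dim(\underline{\mod}\La) \ge c-1$. I would argue by contradiction, assuming $\langle M \rangle_{c-1} = \underline{\mod}\La$. Note at the outset that Lemma \ref{maxcx} already gives $\cx M = c$, so the generator $M$ itself has \emph{maximal} complexity; as we shall see this is exactly what makes the argument delicate.

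Next I would invoke Lemma \ref{sequence} with every subcategory equal to $\langle M \rangle$, so that $\langle M\rangle \diamond \cdots \diamond \langle M\rangle = \langle M \rangle_{c-1} = \underline{\mod}\La$. If I can produce cohomological functors $H_1, \dots, H_c$ on $\underline{\mod}\La$ and natural transformations $H_i \xrightarrow{f_i} H_{i+1}$ such that (A) for each $i$ the map $f_i$ vanishes on $H_i(x[j])$ for every object $x \in \langle M \rangle$ and all $j \gg 0$, while (B) the composite $f_{c-1}\cdots f_1$ is nonzero on $H_1((\La/\ra)[j])$ for infinitely many $j \gg 0$, then the lemma forces $f_{c-1}\cdots f_1$ to vanish on $(\La/\ra)[j]$ for $j \gg 0$, contradicting (B). This would show $\La/\ra \notin \langle M \rangle_{c-1}$, as desired.

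The transformations should come from the finite generation hypothesis. As the discussion preceding the theorem records, $\Ext_\La^*(-,-)$ is a bifunctor over $H$, so each homogeneous element of $H$ induces natural transformations of the functors $\Ext_\La^*(-, X)$ that are compatible with triangles. Moreover $\Ext_\La^*(\La/\ra, \La/\ra)$ is a finitely generated $H$-module whose Krull dimension equals its rate of growth, namely $c$. For (B) I would choose homogeneous elements $\eta_1, \dots, \eta_{c-1} \in H$ of positive degree forming part of a homogeneous system of parameters: since only $c-1$ of the $c$ parameters are used, the relevant quotient retains positive dimension, so multiplication by the product $\eta_1 \cdots \eta_{c-1}$ is nonzero on $\Ext_\La^j(\La/\ra, \La/\ra)$ for infinitely many $j$. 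To realize this product as a composite of the required shape, a natural device to try is $H_i = \Ext_\La^*(-, W_{i-1})$, where $W_0 = \La/\ra$ and $W_i$ is the Koszul object (mapping cone) of $\eta_i$ on $W_{i-1}$, with $f_i$ induced by the structure maps of the defining triangles; the total composite then recovers multiplication by $\eta_1 \cdots \eta_{c-1}$ and detects $\La/\ra$.

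The step I expect to be the main obstacle is condition (A): arranging that each $f_i$ vanishes on shifts of $M$ in high cohomological degree while keeping the composite nonzero on $\La/\ra$. The tension here is sharp, because Lemma \ref{maxcx} forces $\cx M = c$ to be maximal; thus the support variety of $M$ has full dimension, and \emph{no} nonzero element of $H$ annihilates the high-degree cohomology of $M$. In particular the vanishing in (A) cannot be produced by a plain multiplication operator, nor read off from support varieties, since that same support also governs $\La/\ra$. I therefore expect that the vanishing on $\langle M \rangle$ must be engineered from the triangle structure of the Koszul tower together with the finiteness of $\Ext_\La^*(M, W_{i-1})$ as an $H$-module --- exploiting that $M$ is a single module, hence imposes only finitely many, bounded-degree constraints --- rather than from any annihilation statement. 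Making this per-step vanishing on the generator coexist with the surviving product on $\La/\ra$ is the crux, and choosing the functors $H_i$ and the elements $\eta_i$ so as to achieve both simultaneously is, I expect, the hardest and most technical part of the argument.
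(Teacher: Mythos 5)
You have the right skeleton---reduce to showing that some object fails to lie in $\langle M \rangle_{c-1}$ and detect this with the ghost-type Lemma \ref{sequence}---and you correctly identify the crux; but the crux is exactly what is missing, and the specific device you propose cannot be made to work. If $W_0 = \La/\ra$ and $W_i$ is the Koszul object of $\eta_i$ on $W_{i-1}$, then the transformation $\underline{\Hom}_{\La}(-,W_{i-1}) \to \underline{\Hom}_{\La}(-,W_i)$ induced by the structure map sits in a long exact sequence whose preceding map is multiplication by $\eta_i$ on $\Ext^*_{\La}(x,W_{i-1})$; for the transformation to vanish on $x=M$ in all high degrees you would need that multiplication to be eventually \emph{surjective}, which for a finitely generated graded $H$-module forces polynomial growth of rate at most one. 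But under {\bf{Fg}} the module $\Ext^*_{\La}(M,W_{i-1})$ grows at rate $c-(i-1)\ge 2$ for $i\le c-1$, since $\cx M=c$ is maximal and $W_{i-1}$ is cut out of $\La/\ra$ by only $i-1$ operators. So condition (A) fails for every choice of the $\eta_i$, no matter how cleverly chosen. Your instinct that no annihilation or support-variety argument can kill the high-degree cohomology of a generator of maximal complexity is correct; the conclusion to draw from it is that the tower must not be built on $\La/\ra$ at all.

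The paper's resolution reverses the roles. The tower is built on $M$ itself: $K_0=M$ and $K_j$ is the middle term of the extension $\phi_{K_{j-1}}(\eta_j)\colon 0 \to K_{j-1} \xrightarrow{f_j} K_j \to \Omega_{\La}^{|\eta_j|-1}(K_{j-1}) \to 0$, with $\eta_j$ chosen by prime avoidance (\cite[Proposition 2.1]{Bergh1}) to be eventually \emph{injective} on $\Ext^*_{\La}(K_{j-1},M\oplus\La/\ra)\oplus\Ext^*_{\La}(M,K_{j-1})$. The functors are the contravariant ones $\underline{\Hom}_{\La}(K_j,-)$ with transformations $(f_j)^*$, and eventual injectivity of $\cdot\eta_j$ on $\Ext^*_{\La}(K_{j-1},M)$ forces $(f_j)^*$ to vanish on high cosyzygies of $M$ by exactness---injectivity, unlike surjectivity, is always achievable regardless of how large the support of $M$ is. The witness excluded from $\langle M\rangle_{c-1}$ is not $\La/\ra$ but the top of the tower $K_{c-1}$, which has complexity one and is therefore periodic; non-vanishing of $(f_{c-1}\cdots f_1)^*$ at infinitely many shifts then follows from the single fact that $f_{c-1}\circ\cdots\circ f_1$ is stably nonzero, proved via the chain of epimorphisms $\Ext^i_{\La}(M,K_{j-1})\to\Ext^i_{\La}(M,K_j)$ and the non-nilpotence of a cohomology operator on $K_{c-1}$. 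The construction you defer as ``the hardest and most technical part'' is therefore the entire content of the proof, and it requires building the Koszul tower on the generator rather than on the test module.
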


\begin{proof}
Denote the complexity of $\La / \ra$ by $c$. Let $M \in \mod \La$ be
a module generating $\underline{\mod} \Lambda$, i.e.\ there exists a
number $n$ such that $\langle M \rangle_n = \underline{\mod}
\Lambda$. Our aim is to show that $n \ge c$. If $c \le 1$, then
there is nothing to prove, so we may assume $c \ge 2$.

By Lemma \ref{maxcx} the module $M$ must have maximal complexity,
that is, the equality $\cx M = c$ holds. Choose, by
\cite[Proposition 2.1]{Bergh1}, a homogeneous element $\eta_1 \in
H^+$ such that the multiplication map
$$\Ext_{\La}^i(M,M \oplus \La / \ra ) \xrightarrow{\cdot \eta_1}
\Ext_{\La}^{i+|\eta_1|}(M,M \oplus \La / \ra )$$ is injective for $i
\gg 0$. Applying the map $\phi_M$ to $\eta_1$ gives a short exact
sequence
$$\phi_M ( \eta_1 ) \colon 0 \to M \xrightarrow{f_1} K_1 \to
\Omega_{\La}^{|\eta_1|-1}(M) \to 0,$$ and the arguments used in the
proof of \cite[Theorem 3.2]{Bergh1} shows that $\cx K_1 = c-1$.
Next, if $c \ge 3$, choose a homogeneous element $\eta_2 \in H^+$
such that the multiplication map
$$\Ext_{\La}^i(K_1,M \oplus \La / \ra ) \oplus \Ext_{\La}^i(M,K_1)
\xrightarrow{\cdot \eta_2} \Ext_{\La}^{i+|\eta_2|}(K_1,M \oplus \La
/ \ra ) \oplus \Ext_{\La}^{i+|\eta_2|}(M,K_1)$$ is injective for $i
\gg 0$. Applying the map $\phi_{K_1}$ to $\eta_2$ gives a short
exact sequence
$$\phi_{K_1} ( \eta_2 ) \colon 0 \to K_1 \xrightarrow{f_2} K_2 \to
\Omega_{\La}^{|\eta_2|-1}(K_1) \to 0,$$ in which $\cx K_2 = c -2$.
We continue this process until we end up with a module $K_{c-1}$ of
complexity $1$. Thus we obtain homogeneous elements $\eta_1, \dots,
\eta_{c-1} \in H^+$, and for each $1 \le j \le c-1$ a short exact
sequence
$$\phi_{K_{j-1}} ( \eta_j ) \colon 0 \to K_{j-1} \xrightarrow{f_j} K_j \to
\Omega_{\La}^{|\eta_j|-1}(K_{j-1}) \to 0$$ with $\cx K_j = c-j$
(here $K_0 =M$). For each $j$ the element $\eta_j$ is chosen in such
a way that it is regular on $\Ext_{\La}^i(K_{j-1},M \oplus \La / \ra
) \oplus \Ext_{\La}^i(M,K_{j-1})$ for $i \gg 0$.

For each $1 \le j \le c-1$ and $i \gg 0$, the exact sequence
$\phi_{K_{j-1}}( \eta_j )$ induces the two exact sequences
$$\xymatrix@C=15pt{
\Ext_{\La}^i(K_j,M) \ar[r]^{(f_j)^*} & \Ext_{\La}^i(K_{j-1},M)
\ar[r]^<<<{\cdot \eta_j} & \Ext_{\La}^{i+|\eta_j|}(K_{j-1},M) }$$
and
$$\xymatrix@C=15pt{
\Ext_{\La}^i(M,K_{j-1}) \ar[r]^{(f_j)_*} & \Ext_{\La}^i(M,K_j)
\ar[r] & \Ext_{\La}^{i-|\eta_j|+1}(M,K_{j-1}) \ar[r]^>>>>{\cdot
\eta_j} & \Ext_{\La}^{i+1}(M,K_{j-1}). }$$ From the upper exact
sequence, we see that the map $\Ext_{\La}^i(K_j,M)
\xrightarrow{(f_j)^*} \Ext_{\La}^i(K_{j-1},M)$ vanishes for $i \gg
0$, since the multiplication map involving $\eta_j$ is injective.
From the lower exact sequence we see that the map
$\Ext_{\La}^i(M,K_{j-1}) \xrightarrow{(f_j)_*} \Ext_{\La}^i(M,K_j)$
is surjective for $i \gg 0$.

The latter implies that when $i$ is large, the maps $f_1, \dots,
f_{c-1}$ induce a chain
$$\Ext_{\La}^i(M,M) \xrightarrow{(f_1)_*} \Ext_{\La}^i(M,K_1)
\xrightarrow{(f_2)_*} \cdots \xrightarrow{(f_{c-1})_*}
\Ext_{\La}^i(M,K_{c-1})$$ of epimorphisms. Now choose a homogeneous
element $\eta \in H^+$ which is regular on $\Ext_{\La}^i (K_{c-1},
\La / \ra )$ for $i \gg 0$. Applying $\phi_{K_{c-1}}$ to this
element gives an element
$$\phi_{K_{c-1}} ( \eta ) \colon 0 \to K_{c-1} \to K \to
\Omega_{\La}^{|\eta|-1}(K_{c-1}) \to 0$$ in
$\Ext_{\La}^*(K_{c-1},K_{c-1})$, where the module $K$ is projective.
Using the arguments in the proof of \cite[Corollary 3.2]{Bergh2}, we
see that $\phi_{K_{c-1}} ( \eta )$ cannot be nilpotent in
$\Ext_{\La}^*(K_{c-1},K_{c-1})$. Consequently, given any $w \in
\mathbb{N}$, there is an integer $i \ge w$ such that
$\Ext_{\La}^i(K_{c-1},K_{c-1})$ is nonzero. Using the exact
sequences $\phi_M ( \eta_1 ), \phi_{K_1} ( \eta_2 ), \dots,
\phi_{K_{c-2}}( \eta_{c-1})$ we then see that given any $w \in
\mathbb{N}$, there is an integer $i \ge w$ such that
$\Ext_{\La}^i(M,K_{c-1})$ is nonzero. This shows that the
composition
$$M \xrightarrow{f_1} K_1 \xrightarrow{f_2} \cdots
\xrightarrow{f_{c-1}} K_{c-1}$$ is nonzero in $\underline{\mod}
\Lambda$.

Now consider the functors $\underline{\Hom}_{\La}(K_j,-)$ on
$\underline{\mod} \Lambda$, together with the natural
transformations
$$\underline{\Hom}_{\La}(K_{c-1},-) \xrightarrow{(f_{c-1})^*}
\underline{\Hom}_{\La}(K_{c-2},-) \xrightarrow{(f_{c-2})^*} \cdots
\xrightarrow{(f_1)^*} \underline{\Hom}_{\La}(M,-).$$ Since the map
$\Ext_{\La}^i(K_j,M) \xrightarrow{(f_j)^*} \Ext_{\La}^i(K_{j-1},M)$
vanishes for $i \gg 0$, the map $\underline{\Hom}_{\La}(K_j,
\Omega_{\La}^i(M)) \xrightarrow{(f_j)^*}
\underline{\Hom}_{\La}(K_{j-1}, \Omega_{\La}^i(M))$ vanishes for $i
\ll 0$. From Lemma \ref{sequence} we conclude that for every module
$X \in \langle M \rangle_{c-1}$, the map
$$\underline{\Hom}_{\La}(K_{c-1}, \Omega_{\La}^i(X))
\xrightarrow{(f_{c-1} \circ \cdots \circ f_1)^*}
\underline{\Hom}_{\La}(M, \Omega_{\La}^i(X))$$ vanishes for $i \ll
0$. However, by \cite[Theorem 2.3]{Bergh1} the module $K_{c-1}$ is
periodic in $\underline{\mod} \Lambda$, that is, there is an integer
$p \ge 1$ such that $K_{c-1} \simeq \Omega_{\La}^p(K_{c-1})$ in
$\underline{\mod} \Lambda$. Therefore, since the composition
$f_{c-1} \circ \cdots \circ f_1$ is nonzero in $\underline{\mod}
\Lambda$, the map
$$\underline{\Hom}_{\La}(K_{c-1}, \Omega_{\La}^{ip}(K_{c-1}))
\xrightarrow{(f_{c-1} \circ \cdots \circ f_1)^*}
\underline{\Hom}_{\La}(M, \Omega_{\La}^{ip}(K_{c-1}))$$ does not
vanish for any $i \in \mathbb{Z}$. This shows that the module
$K_{c-1}$ cannot be an element in $X \in \langle M \rangle_{c-1}$,
and so $n \ge c$. The proof is complete.
\end{proof}

Using Proposition \ref{lowerbound} and Auslander's upper bound, we
obtain the promised result on the representation dimension. We
denote by $\ell \ell ( \La )$ the Loewy length of our algebra
$\La$.

\begin{theorem}\label{repdim}
If $\La$ is a non-semisimple selfinjective algebra and {\bf{Fg}}
holds, then
$$\cx \La / \ra +1 \le \repdim \La
\le \ell \ell ( \La ).$$
\end{theorem}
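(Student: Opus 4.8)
The plan is to assemble the final Theorem~\ref{repdim} from three ingredients that are already in place, so the ``proof'' amounts to a short bookkeeping argument rather than a fresh computation. The two outer inequalities come from genuinely different sources: the upper bound $\repdim \La \le \ell\ell(\La)$ is precisely Auslander's theorem on selfinjective algebras, quoted in the exposition of Section~\ref{repdim}, and so I would simply invoke it. The lower bound $\cx \La/\ra + 1 \le \repdim \La$ is the payoff of the preceding machinery, and the main task is to chain together Theorem~\ref{dimension} with Proposition~\ref{lowerbound}.

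Concretely, I would proceed as follows. First I would check that the hypotheses of both cited results are met: $\La$ is selfinjective and non-semisimple, and {\bf Fg} holds, which is exactly what the statement assumes. Then Proposition~\ref{lowerbound} gives
$$\repdim \La \ge \dim(\underline{\mod}\La) + 2,$$
and Theorem~\ref{dimension} gives
$$\dim(\underline{\mod}\La) \ge \cx \La/\ra - 1.$$
Substituting the second inequality into the first yields $\repdim \La \ge (\cx\La/\ra - 1) + 2 = \cx\La/\ra + 1$, which is the desired lower bound. The upper bound is Auslander's, completing the chain $\cx\La/\ra + 1 \le \repdim\La \le \ell\ell(\La)$.

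The one point that deserves a moment's care is the non-semisimplicity hypothesis, since Proposition~\ref{lowerbound} requires it explicitly (a semisimple algebra has $\repdim$ at most $2$ and the inequality would degenerate). I would note that non-semisimplicity is assumed, so $\La/\ra$ is not projective and the bound is meaningful; in the extreme case where $\cx\La/\ra \le 1$ the lower bound is still valid and consistent with $\repdim\La \ge 2$. I do not anticipate a genuine obstacle here: all the analytic and homological difficulty has been absorbed into Theorem~\ref{dimension}, whose proof carries out the delicate construction of the elements $\eta_1,\dots,\eta_{c-1}$ and the application of Lemma~\ref{sequence}. The remaining work is purely the juxtaposition of two stated inequalities plus Auslander's classical upper bound, so the proof is essentially immediate once those results are in hand.
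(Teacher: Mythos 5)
Your proposal is correct and matches the paper exactly: the paper derives Theorem~\ref{repdim} by combining Proposition~\ref{lowerbound} with Theorem~\ref{dimension} for the lower bound and invoking Auslander's bound $\repdim \La \le \ell\ell(\La)$ for selfinjective algebras for the upper bound. Your extra remark about the non-semisimplicity hypothesis and the degenerate case $\cx \La/\ra \le 1$ is a sensible sanity check but changes nothing.
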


Rouquier showed that the representation dimension of the exterior
algebra on an $n$-dimensional vector space is exactly $n+1$. It
therefore seems natural to ask the following:

\begin{question}
When {\bf{Fg}} holds, what is the exact value of $\repdim \La$?
\end{question}

The following corollaries to Theorem \ref{repdim} provide lower
bounds for the representation dimension of the algebras given in the
three examples prior to Theorem \ref{dimension}. In particular, we
obtain \cite[Corollary 19]{Oppermann1}, half of \cite[Theorem
4.1]{Rouquier2} and the result of Avramov and Iyengar on the
representation dimension of Artin complete intersections (cf.\
\cite{Avramov2}).

\begin{corollary}\label{groups}
Suppose $k$ is a field of positive characteristic $p$, and let $G$
be a finite group whose order is divisible by $p$. Then $\repdim
kG \ge p- \rank G +1,$ that is, the representation dimension of
$kG$ is strictly greater than $\Krulldim \Ho^*(G,k)$.
\end{corollary}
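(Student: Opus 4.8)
The plan is to apply Theorem \ref{repdim} to $\La = kG$ and then identify the resulting complexity term with the $p$-rank of $G$, which we take $\rank G$ to denote. First I would verify the two standing hypotheses of Theorem \ref{repdim}. The group algebra $kG$ is a Frobenius algebra, hence selfinjective, and since $p$ divides $|G|$ Maschke's theorem guarantees that $kG$ is not semisimple. Moreover, Example (i) preceding Theorem \ref{dimension} already shows that {\bf{Fg}} holds: by Evens' theorem (cf.\ \cite{Evens}) the cohomology ring $\Ho^*(G,k) = \Ext_{kG}^*(k,k)$ is Noetherian, its even part $\bigoplus \Ho^{2i}(G,k)$ is a commutative Noetherian graded $k$-algebra of finite type, and for finitely generated modules $X_1, X_2$ the bimodule $\Ext_{kG}^*(X_1,X_2)$ is finitely generated over it with scalar actions that coincide. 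Taking $H$ to be this even part therefore verifies the assumption.

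With both hypotheses in place, Theorem \ref{repdim} yields $\repdim kG \ge \cx (kG/\ra) + 1$, so the remaining task is to show that $\cx(kG/\ra)$ equals $\Krulldim \Ho^*(G,k)$. Since $kG/\ra$ is the direct sum of the simple $kG$-modules, and since the discussion in Section \ref{repdim} gives $\cx X \le \cx(kG/\ra)$ for every $X$, the number $\cx(kG/\ra)$ is exactly the maximal complexity occurring among finitely generated $kG$-modules. I would then invoke the classical theory of Alperin and Evens: the complexity of a $kG$-module is the rate of growth of $\Ext_{kG}^*(-, k)$, this rate is bounded above by that of $\Ho^*(G,k)$, and the latter coincides with $\Krulldim \Ho^*(G,k)$. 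Since the trivial module $k$ is a summand of $kG/\ra$ and its support variety is all of $\Spec \Ho^*(G,k)$, it attains this bound, so $\cx(kG/\ra) = \cx k = \Krulldim \Ho^*(G,k)$.

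Finally, Quillen's dimension theorem identifies $\Krulldim \Ho^*(G,k)$ with the $p$-rank of $G$, namely the maximal rank of an elementary abelian $p$-subgroup. Combining the three identifications gives
$$\repdim kG \ge \cx(kG/\ra) + 1 = \Krulldim \Ho^*(G,k) + 1,$$
which is exactly the assertion that $\repdim kG$ is strictly greater than $\Krulldim \Ho^*(G,k)$, equivalently at least one more than $\rank G$. The genuinely new input is the first inequality, supplied by Theorem \ref{repdim}; the identification of $\cx(kG/\ra)$ with the Krull dimension of cohomology is classical. I therefore expect the only real obstacle to be assembling and citing the support-variety results of Alperin--Evens and Quillen correctly, rather than any fresh computation.
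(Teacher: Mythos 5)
Your proof is correct and follows essentially the same route as the paper: apply Theorem \ref{repdim} to $kG$ and identify $\cx (kG/\ra)$ with $\Krulldim \Ho^*(G,k) = p\text{-}\rank G$ via Quillen's theorem. The paper's own proof is just the one-line citation of Quillen; your additional verification of the hypotheses of Theorem \ref{repdim} and of the equality $\cx(kG/\ra) = \cx k$ fills in the same details the paper leaves implicit.
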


\begin{proof}
By a result of Quillen (cf.\ \cite{Quillen1, Quillen2}), the
complexity of the trivial $kG$-module, i.e.\ the Krull dimension of
the cohomology ring $\Ho^*(G,k)$, equals the $p$-rank of $G$.
\end{proof}

\begin{corollary}\label{CI}
Let $A$ be a commutative Noetherian local complete intersection of
codimension $c$. If $A$ is Artin, then $\repdim A \ge c+1.$
\end{corollary}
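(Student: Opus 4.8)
The plan is to deduce Corollary \ref{CI} directly from Theorem \ref{repdim} together with the verification, already recorded in Example (ii) prior to Theorem \ref{dimension}, that the assumption \textbf{Fg} holds for an Artin complete intersection. First I would observe that since $A$ is a commutative Noetherian local complete intersection that is Artin, the maximal ideal $\m$ is nilpotent, so $A$ is complete with respect to the $\m$-adic topology and hence $\widehat{A} = A$. Thus the Eisenbud operators furnish a polynomial ring $A[\chi_1, \dots, \chi_c]$ mapping to $\Ext_A^*(X,X)$ for every finitely generated module $X$, with matching left and right scalar actions and finite generation of $\Ext_A^*(X,Y)$; taking $H = A[\chi_1, \dots, \chi_c]$ (a commutative Noetherian graded $k$-algebra of finite type, where $k = A$) shows \textbf{Fg} is satisfied. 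I would also note that $A$ is a selfinjective algebra: being Artin and a complete intersection, it is in particular Gorenstein of dimension zero, hence selfinjective, and it is non-semisimple precisely because the codimension $c$ is at least $1$ (if $c = 0$ then $A$ is regular, hence a field, and the statement is vacuous).

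Next I would identify the complexity $\cx A/\ra$ of the residue field. The central point is that for a local complete intersection of codimension $c$, the minimal free resolution of the residue field has polynomial growth of degree exactly $c - 1$, so that $\cx (A/\m) = c$. This is the classical computation of the growth of Betti numbers over a complete intersection, and it is precisely the quantity that the Eisenbud operators were designed to detect: $\Ext_A^*(A/\m, A/\m)$ is a finitely generated module over the polynomial ring $H$ in $c$ variables, and its rate of growth equals the Krull dimension of this Ext algebra modulo its nilradical, which is $c$.

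With these two ingredients in place, the corollary is immediate: applying Theorem \ref{repdim} to the non-semisimple selfinjective algebra $A$ yields
$$\cx A/\ra + 1 \le \repdim A,$$
and substituting $\cx A/\ra = c$ gives $\repdim A \ge c + 1$, as desired.

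I do not anticipate a genuine obstacle here, since the corollary is essentially a bookkeeping application of the main theorem; the only substantive input beyond the theorem itself is the complexity computation $\cx A/\m = c$, which I would cite from the theory of support varieties and Betti number growth over complete intersections (for instance via the Eisenbud operators as in \cite{Avramov1}) rather than prove from scratch. The one point requiring a word of care is confirming that the Artin hypothesis really does force $A$ to be complete and selfinjective, so that Example (ii) applies verbatim and Theorem \ref{repdim} is available; this is straightforward but worth stating explicitly.
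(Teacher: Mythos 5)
Your proposal is correct and follows essentially the same route as the paper: the paper's proof is a one-line application of Theorem \ref{repdim}, using Example (ii) for \textbf{Fg} and citing Tate's classical theorem for the fact that the complexity of the residue field of a complete intersection equals the codimension (you cite the equivalent Betti-number-growth computation via the Eisenbud operators instead of Tate, but this is the same input). Your extra care about completeness, selfinjectivity and the $c=0$ edge case is sensible bookkeeping that the paper leaves implicit.
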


\begin{proof}
By a classical result of Tate (cf.\ \cite[Theorem 6]{Tate}), the
complexity of the simple module over a complete intersection equals
the codimension of the ring.
\end{proof}

\begin{remark}
Let $A$ be a commutative Noetherian local complete intersection of
codimension $c$. If $A$ is complete, then the proof of Theorem
\ref{dimension} also applies to the stable category of finitely
generated maximal Cohen-Macaulay $A$-modules. Namely, the dimension
of this triangulated category is at least $c-1$.
\end{remark}

\begin{corollary}\label{Hochschild}
Suppose $\La$ is semisimple and projective as a $k$-module.
Furthermore, suppose the Hochschild cohomology ring $\HH^*( \La )$
is Noetherian, and that $\Ext_{\La}^*( \La / \ra, \La / \ra )$ is a
finitely generated $\HH^*( \La )$-module. Then $\repdim \La \ge
\Krulldim \HH^*( \La )+1.$ In particular, the representation
dimension of the exterior algebra on an $n$-dimensional vector space
is at least $n+1$
\end{corollary}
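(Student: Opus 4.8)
The plan is to deduce Corollary \ref{Hochschild} directly from Theorem \ref{repdim} by exhibiting the required finite generation data. Example (iii) already explains how to satisfy assumption {\bf{Fg}}: since $\La$ is projective as a $k$-module, the Hochschild cohomology ring $\HH^*(\La) = \Ext_{\Lae}^*(\La,\La)$ is graded commutative, and for each $M \in \mod \La$ the functor $- \otimes_{\La} M$ supplies a graded ring homomorphism $\HH^*(\La) \to \Ext_{\La}^*(M,M)$ under which the left and right actions on $\Ext_{\La}^*(X,Y)$ agree up to a graded sign. First I would take $H$ to be the even part $\bigoplus \HH^{2i}(\La)$, which is genuinely commutative; the Noetherian hypothesis on $\HH^*(\La)$ passes to this subalgebra, and finite generation of $\Ext_{\La}^*(\La/\ra,\La/\ra)$ over $\HH^*(\La)$ gives finite generation over $H$ as well (the latter having the same rate of growth). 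With $H$ so chosen, {\bf{Fg}} holds, and Theorem \ref{repdim} yields $\repdim \La \ge \cx \La/\ra + 1$.

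The remaining task is to identify $\cx \La/\ra$ with $\Krulldim \HH^*(\La)$. My approach is to compare two rates of growth. On one side, $\cx \La/\ra$ is by definition the rate of growth of $\Ext_{\La}^*(\La/\ra,\La/\ra)$, which under {\bf{Fg}} equals the rate of growth of $H$ since the former is a finitely generated $H$-module whose support exhausts $\Spec H$ (the element witnessing maximal complexity is non-nilpotent in $H$). On the other side, the rate of growth of a finitely generated graded commutative $k$-algebra of finite type equals its Krull dimension, a standard Hilbert-polynomial computation. Since $H$ is the even part of $\HH^*(\La)$, which is finite over $H$, one has $\Krulldim H = \Krulldim \HH^*(\La)$, and therefore $\cx \La/\ra = \Krulldim \HH^*(\La)$. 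Substituting into the bound from Theorem \ref{repdim} gives $\repdim \La \ge \Krulldim \HH^*(\La) + 1$.

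For the final assertion about the exterior algebra, I would invoke the computation recorded just before Theorem \ref{dimension}: when $\La = k\langle x_1,\dots,x_n\rangle/(x_i^2, x_ix_j+x_jx_i)$, the ring $\Ext_{\La}^*(k,k)$ is the polynomial ring $k[x_1,\dots,x_n]$, which is a finitely generated $\HH^*(\La)$-module via $- \otimes_{\La} k$, so the hypotheses of the corollary are met. Here $\cx k = n$, since the Koszul dual $k[x_1,\dots,x_n]$ has polynomial growth of degree $n-1$, whence $\Krulldim \HH^*(\La) = n$ and the bound reads $\repdim \La \ge n+1$.

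The main obstacle I anticipate is the clean identification of complexity with Krull dimension. The statement of the corollary contains an apparent typo, writing \emph{semisimple} where \emph{non-semisimple} is plainly intended (the semisimple case is vacuous and the exterior algebra is not semisimple), so one must read it as asserting the non-semisimple hypothesis needed to apply Theorem \ref{repdim}. The genuinely delicate point is ensuring that the support of $\Ext_{\La}^*(\La/\ra,\La/\ra)$ as an $H$-module is all of $\Spec H$, so that its rate of growth truly equals $\Krulldim H$ rather than merely being bounded by it; this is where the non-nilpotence arguments from \cite{Bergh1} and \cite{Bergh2}, already used in the proof of Theorem \ref{dimension}, do the real work. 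Once that equality is in hand, the passage to $\HH^*(\La)$ and to the exterior-algebra example is routine.
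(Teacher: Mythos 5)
Your reduction to Theorem \ref{repdim} via the choice $H = \bigoplus \HH^{2i}(\La)$ is exactly the paper's setup (this is Example (iii) before Theorem \ref{dimension}), as is the easy inequality $\cx \La / \ra \le \Krulldim \HH^*(\La)$, and you are right that ``semisimple'' in the statement is a typo for ``non-semisimple''. The gap is in the reverse inequality $\Krulldim \HH^*(\La) \le \cx \La / \ra$, which you yourself flag as ``the genuinely delicate point'' but never actually close. You assert that the support of $\Ext_{\La}^*(\La/\ra, \La/\ra)$ as an $H$-module is all of $\Spec H$ --- equivalently, that the annihilator of this module in $H$, i.e.\ the kernel of the characteristic map $H \to \Ext_{\La}^*(\La/\ra, \La/\ra)$, is nilpotent. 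That is a genuine theorem in its own right (essentially that the support variety of $\La/\ra$ is the whole variety), and the parenthetical justification you offer --- ``the element witnessing maximal complexity is non-nilpotent in $H$'' --- is not an argument for it. Nor do the non-nilpotence results of \cite{Bergh1} and \cite{Bergh2} supply it: those concern specific elements of $H^+$ acting on the Ext-groups of the modules $K_j$ constructed in the proof of Theorem \ref{dimension}, not the annihilator of $\Ext_{\La}^*(\La/\ra, \La/\ra)$.

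The paper proves the reverse inequality by a different, self-contained route that avoids any support computation. Since $\HH^*(\La) = \Ext_{\Lae}^*(\La, \La)$, one can run an induction on the length of a finitely generated bimodule $B$: a short exact sequence $0 \to S \to B \to B' \to 0$ with $S$ simple gives an exact sequence of finitely generated $\HH^*(\La)$-modules (by \cite[Proposition 2.4]{Erdmann}), whence $\gamma ( \Ext_{\Lae}^*(\La, B) ) \le \gamma ( \Ext_{\Lae}^*(\La, \La/\ra) ) = \cx_{\La^{\e}} \La$ for every $B$. Taking $B = \La$ bounds $\gamma ( \HH^*(\La) )$ by $\cx_{\La^{\e}} \La$, and finally $\cx_{\La^{\e}} \La = \cx \La/\ra$ because applying $- \otimes_{\La} \La/\ra$ to the minimal projective bimodule resolution of $\La$ yields the minimal projective resolution of $\La/\ra$. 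To repair your version you would need to prove, or correctly cite, the nilpotence of the kernel of the characteristic homomorphism; otherwise the bimodule induction is the way to go. The specialization to the exterior algebra is routine once the identity $\cx \La/\ra = \Krulldim \HH^*(\La)$ is in hand.
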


\begin{proof}
The Krull dimension of $\HH^* ( \La )$ is its rate of growth $\gamma
\left ( \HH^* ( \La ) \right )$ as a graded $k$-module. Therefore,
since the $\HH^* ( \La )$-module $\Ext_{\La}^* ( \La / \ra, \La /
\ra )$ is finitely generated, we see that
$$\cx \La / \ra = \gamma \left ( \Ext_{\La}^* ( \La / \ra, \La /
\ra ) \right ) \le \gamma \left ( \HH^* ( \La ) \right ) = \Krulldim
\HH^* ( \La ).$$ Denote the radical of $\La^{\e}$ by $\ra^{\e}$, and
let $B$ be a finitely generated bimodule (i.e.\ $B \in \mod
\La^{\e}$). If $B$ is not simple, then choose an exact sequence
$$0 \to S \to B \to B' \to 0$$
in which $S$ is simple. This sequence induces an exact sequence
$$\Ext_{\La^{\e}}^*( \La, S ) \to \Ext_{\La^{\e}}^*( \La, B ) \to
\Ext_{\La^{\e}}^*( \La, B' )$$ of $\HH^* ( \La )$-modules, all of
which are finitely generated by \cite[Proposition 2.4]{Erdmann}.
Consequently the inequality
$$\gamma \left ( \Ext_{\La^{\e}}^*( \La, B ) \right ) \le \max \{
\gamma \left ( \Ext_{\La^{\e}}^*( \La, S ) \right ), \gamma \left (
\Ext_{\La^{\e}}^*( \La, B' ) \right ) \}$$ holds, and so induction
on length gives
$$\gamma \left ( \Ext_{\La^{\e}}^*( \La, B ) \right ) \le
\gamma \left ( \Ext_{\La^{\e}}^*( \La, \La / \ra ) \right ) =
\cx_{\La^{\e}} \La.$$ In particular, the inequality $\gamma \left (
\HH^* ( \La ) \right ) \le \cx_{\La^{\e}} \La$ holds. But the
complexity of $\La$ as a bimodule equals that of the $\La$-module
$\La / \ra$. Namely, applying $- \otimes_{\La} \La / \ra$ to the
minimal projective bimodule resolution of $\La$ gives the minimal
$\La$-projective resolution of $\La / \ra$. Therefore
$\cx_{\La^{\e}} \La = \cx \La / \ra$, and this shows that the Krull
dimension of $\HH^* ( \La )$ equals $\cx \La / \ra$.
\end{proof}

We end with two examples illustrating Corollary \ref{Hochschild}.
These provide new examples of classes of algebras with arbitrarily
large representation dimension.

\begin{examples}
(i) Let $k$ be a field, let $n \ge 1$ be an integer, and let $\La$
be the quantum complete intersection
$$k \langle X_1, \dots, X_n \rangle / ( X_i^2, \{ X_iX_j -
q_{ij}X_jX_i \}_{i<j} ),$$ where $0 \neq q_{ij} \in k$.  This
algebra is finite dimensional of dimension $2^n$, and the complexity
of $k$ is $n$. Furthermore, this is a Frobenius algebra; the
codimension two argument in the beginning of \cite[Section
3]{BerghErdmann} carries over. In particular, this algebra is
selfinjective, and it was shown in \cite{ErdmannSolberg} that
{\bf{Fg}} holds if and only if all the $q_{ij}$ are roots of unity.
Therefore, when this is the case, then the representation dimension
of $\La$ is at least $n+1$.

(ii) Let $k$ be an algebraically closed field, and let $R$ be a
Noetherian Artin-Schelter regular Koszul $k$-algebra of dimension
$d$. That is, $R$ is graded connected of global dimension $d$, its
Gelfand-Kirillov dimension is finite, and
$$\Ext_R^i(k,R) \simeq \left \{
\begin{array}{ll}
0 & i \neq d \\
k & i=d \text{ (up to shift)}.
\end{array}
\right.$$ If $R$ is a finitely generated module over its center,
then by \cite[Proposition 9.15]{Solberg} the Koszul dual $\La$ of
$R$ is selfinjective, satisfies {\bf{Fg}}, and $\cx \La / \ra =d$.
Thus in this case the representation dimension of $\La$ is at least
$d+1$.

An example of such an algebra is obtained from the Sklyanin algebras
(cf.\ \cite[Section 8]{Smith}: let $E$ be an elliptic curve over
$k$, and fix a point $P \in E$ such that $nP=0$ for some $n \ge 1$.
Denote by $\sigma_P \colon E \to E$ the corresponding translation
automorphism. Furthermore, let $d \ge 1$ be an integer, and let
$A_d(E, \sigma_P )$ be the $d$-dimensional Sklyanin algebra. This is
an Artin-Schelter regular algebra of the above type.
\end{examples}

\end{document}